\numberwithin{equation}{section}
\newtheorem{Thm}[equation]{Theorem}
\theoremstyle{definition}
\begin{document}

\title [Automorphic correction of $E_{10}$]
{Automorphic correction \\ of the hyperbolic Kac-Moody algebra $E_{10}$}
\author[Henry Kim]{Henry H. Kim$^{\star}$}
\thanks{$^{\star}$ partially supported by an NSERC grant.}
\address{Department of
Mathematics, University of Toronto, Toronto, ON M5S 2E4, CANADA and Korea Institute for Advanced Study, Seoul, Korea}
\email{henrykim@math.toronto.edu}
\author[Kyu-Hwan Lee]{Kyu-Hwan Lee}
\address{Department of
Mathematics, University of Connecticut, Storrs, CT 06269, U.S.A.}
\email{khlee@math.uconn.edu}
\subjclass[2010]{Primary 17B67; Secondary 11F22}
\begin{abstract}
In this paper we study automorphic correction of the hyperbolic Kac-Moody algebra $E_{10}$, using the Borcherds product for $O(10,2)$ attached to a weakly holomorphic modular form of weight $-4$ for $SL_2(\mathbb Z)$. We also clarify some aspects of automorphic correction for Lorentzian Kac-Moody algebras and give heuristic reasons for the expectation that every Lorentzian Kac-Moody algebra has an automorphic correction. 
\end{abstract}

\maketitle

\section*{Introduction}

In his Wigner medal acceptance speech \cite{Kac-97}, V. Kac began with the remark, ``It is a well kept secret that the theory of Kac-Moody algebras has been a disaster." He continued to mention two exceptions: the affine Kac-Moody algebras and Borcherds' algebras. He explained the common feature of these algebras with the idea of locality. If we look at a narrower class of Borcherds' algebras related to the Monster Lie algebras \cite{Bor-90, Bor-92} and modular products \cite{Bor-95}, we observe another common feature of these algebras. Namely, the denominator functions of these algebras are automorphic forms.

One of Borcherds' motivations for constructing the fake Monster Lie algebra was to extend or {\em correct} the underlying Kac-Moody Lie algebra to obtain a generalized Kac-Moody algebra whose denominator function is an automorphic form. Gritsenko and Nikulin pursued Borcherds' idea to construct many examples of {\em automorphic correction} for rank $3$ Lorentzian Kac-Moody algebras \cite{GN-96, GN-97, GN-02}. For example, for Feingold and Frenkel's rank $3$ Kac-Moody algebra $\mathcal F$ \cite{FF}, the automorphic correction is given by the Siegel cusp form $\Delta_{35}(Z)$ of weight $35$, called {\em Igusa modular form}. See \cite{GN-96} or Section \ref{rank3} of this paper.

What are the benefits of having automorphic correction? First, automorphic forms satisfy a lot more of symmetries than the usual denominator function. For instance, we can prove generalized Macdonald identities as consequences of modular transforms \cite[Theorem 6.5]{Bor-95}. Second, we obtain generalized Kac-Moody algebras with known root multiplicities and may apply analytic tools such as the method of Hardy-Ramanujan-Rademacher to compute asymptotic formulas for root multiplicities, and thereby obtain bounds for root multiplicities of underlying Kac-Moody algebras. This analytic approach was taken in \cite{KimLee} to obtain upper bounds for root multiplicities of $\mathcal F$. Third, we may get connections to and applications for other branches of mathematics. Indeed, Borcherds'  work  \cite{Bor-96} on the fake Monster Lie superalgebra has applications to the moduli space of Enriques surfaces, and Gritsenko and Nikulin's work \cite{GN-96} is related to the moduli space of K3 surfaces.  

This paper is a continuation of our work on automorphic correction of hyperbolic Kac-Moody algebras. 
In our previous paper \cite{KimLee-1}, automorphic correction for some rank $2$ symmetric hyperbolic Kac-Moody algebras was constructed using Hilbert modular forms which are Borcherds lifts of weakly holomorphic modular forms \cite{Bor-98, BB}. In this paper, we clarify some aspects of automorphic correction for Lorentzian Kac-Moody algebras and give heuristic reasons for the expectation that every Lorentzian Kac-Moody algebra has an automorphic correction. 
We consider the examples of the rank $3$ hyperbolic Kac-Moody algebra $\mathcal F$ and rank $2$ symmetric hyperbolic Kac-Moody algebras, and explain why automorphic correction is highly non-trivial from the point of view of automorphic forms.

After that, we focus on the Kac-Moody algebra $E_{10}$. The algebra $E_{10}$ has attracted much attention from mathematical physicists, for example, \cite{BGGN, KN, DKN, BHK}. Moreover, S. Viswanath \cite{Vis} showed that 
it contains every simply laced hyperbolic Kac-Moody algebra as a Lie
subalgebra. The root multiplicities of this algebra was first studied by Kac, Moody and Wakimoto \cite{KMW}.

In Section \ref{hyperbolic}, we review a concrete realization of root lattice of $E_{10}$ as $2\times 2$ hermitian matrices over octavians, and in the next section, we obtain automorphic correction of the hyperbolic Kac-Moody algebra $E_{10}$. The automorphic correction is provided by a Borcherds lift on $O(10,2)$ of the weakly holomorphic modular form $f(\tau) = E_4(\tau)^2/\Delta_{12}(\tau)$, where $E_4(\tau)$ is the Eisenstein series of weight $4$ and 
$\Delta_{12}(\tau)$ is the modular discriminant function. Since $f(\tau)=\sum_n c(n)q^n$ has positive Fourier coefficients,
the automorphic correction $\mathcal G$ of $E_{10}$ is a generalized Kac-Moody algebra. However, $c(n)$ is too large to produce good upper bounds for root multiplicities of $E_{10}$. 
At the end of this paper, we give for comparison two other generalized Kac-Moody algebras which contain $E_{10}$. Their denominators are not automorphic forms, but they yield good upper bounds for root multiplicities of $E_{10}$. 

\subsection*{Acknowledgments} This joint work was initiated at the Korea Institute for Advanced Study (KIAS) in the summer of 2012, and was completed at the Institute for Computational and Experimental Research in Mathematics (ICERM) in the spring of 2013. We thank both institutes for stimulating environments for research.


\section{Automorphic Correction} \label{correction}

In this section, we recall the theory of automorphic correction established by Gritsenko and Nikulin \cite{GN-96,GN-97,GN-02}. The original idea of automorphic correction can be traced back to Borcherds' work \cite{Bor-90,Bor-92}.

\subsection{Modular forms on $O(n+1,2)$}

Let $(V,Q)$ be a non-degenerate quadratic space over $\mathbb Q$ of type $(n+1,2)$.
Let $V(\Bbb C)$ be the complexification of $V$ and $P(V(\Bbb C))=(V(\Bbb C)-\{0\})/\Bbb C^*$ be the corresponding projective space.
Let $\mathcal{K}^+$ be a connected component of
\begin{equation} \label{zz} \mathcal{K}=\{ [Z]\in P(V(\Bbb C)) : (Z,Z)=0,\, (Z,\bar Z)<0\},
\end{equation}
and let $O^+_V(\mathbb R)$ be the subgroup of elements in $O_V(\mathbb R)$ which preserve the components of $\mathcal K$.

For $Z\in V(\Bbb C)$, write $Z=X+iY$ with $X,Y\in V(\Bbb R)$.
Given an even lattice $L\subset V$, let $\Gamma \subseteq O^+_L:=O_L\cap O_V^+(\Bbb R) $ be a subgroup of finite index. Then $\Gamma$ acts on $\mathcal{K}$ discontinuously.
Let
$$\widetilde{\mathcal{K}}^+=\{ Z\in V(\Bbb C)-\{0\} : [Z]\in \mathcal{K}^+\}.
$$

Let $k\in  \frac 1 2 \mathbb Z$, and $\chi$ be a multiplier system of $\Gamma$. Then a meromorphic function $\Phi: \tilde{\mathcal{K}}^+\longrightarrow \Bbb C$ is called a {\em meromorphic modular form} of weight $k$ and
multiplier system $\chi$ for the group $\Gamma$, if
\begin{enumerate}
\item $\Phi$ is homogeneous of degree $-k$, i.e., $\Phi(cZ)=c^{-k}\Phi(Z)$ for all $c\in\Bbb C-\{0\}$,

\item $\Phi$ is invariant under $\Gamma$, i.e., $\Phi(\gamma Z)=\chi(\gamma)\Phi(Z)$ for all $\gamma\in \Gamma$.
\end{enumerate}
This definition agrees with the one given in \cite{GN-02}. Since $SO(3,2)$ is isogeneous to $Sp_4$, the automorphic forms on $O(3,2)$ are Siegel modular forms. Similarly, $SO(2,2)$ is isogeneous to $SL_2\times SL_2$, and so the automorphic forms on $O(2,2)$ are Hilbert modular forms.

\subsection{Automorphic correction}\label{automorphic}
A Kac-Moody algebra $\mathfrak g$ is called {\em Lorentzian} if its generalized Cartan matrix 
is given by a set of simple roots of a Lorentzian lattice $M$, namely, a lattice with a non-degenerate integral symmetric bilinear form 
$(\cdot, \cdot)$ of signature $(n,1)$ for some integer $n \ge 1$. Assume that $\mathfrak g$ is a Lorentzian Kac-Moody algebra.
Let $\Pi$ be a set of (real) simple roots of $\mathfrak g$. Then the generalized Cartan matrix $A$ of $\mathfrak g$ is given by
    \[ A= \begin{pmatrix} \frac {2(\alpha, \alpha')}{(\alpha, \alpha)} \end{pmatrix}_{\alpha, \alpha' \in \Pi}.
\]
The Weyl group $W$ is a subgroup of $O(M)$. 
Consider the cone \[ V(M)= \{ \beta \in M \otimes \mathbb R \, | \, (\beta,\beta) <0 \} ,\] which is a union of two half cones. One of these half cones is denoted by $V^+(M)$. The reflection hyperplanes of $W$ partition $V^+(M)$ into fundamental domains, and we choose one fundamental domain $\mathfrak D \subset V^+(M)$ so that the set $\Pi$ of (real) simple roots is orthogonal to the fundamental domain $\mathfrak D$ and 
\[ \mathfrak D = \{ \beta \in \overline{V^+(M)} \, | \, (\beta, \alpha) \le 0  \text{ for all } \alpha \in \Pi \}.
\]
Note that this is a negative Weyl chamber. We have a Weyl vector $\rho \in M \otimes \mathbb Q$ satisfying $(\rho , \alpha) = - (\alpha, \alpha)/2$ for each $\alpha \in \Pi$.

Define the complexified cone $\Omega(V^+(M))= M \otimes \mathbb R + i V^+(M)$. Let $L=\begin{pmatrix} 0 & -m \\ -m & 0 \end{pmatrix} \oplus M$ be an extended lattice for some $m \in \mathbb N$. We consider the quadratic space $V=L \otimes \mathbb Q$ and obtain $\mathcal K^+$ as in \eqref{zz}. Define a map $\Omega(V^+(M)) \rightarrow \mathcal K$ by $z \mapsto \left [ \frac {(z, z)} {2m} \, e_1 +  e_2 + z \right ]$, where $\{ e_1, e_2 \}$ is the basis for $\begin{pmatrix} 0 & -m \\ -m & 0 \end{pmatrix}$. Then the space $\mathcal K^+$ is canonically identified with $\Omega(V^+(M))$.

The denominator of $\mathfrak g$ is $\sum_{w \in W} \det(w) e(-(w (\rho), z))$, which is not an automorphic form on $\Omega(V^+(M))$
in general.
Gritsenko and Nikulin \cite{GN-02, GN-96, GN-97} introduced the concept of automorphic correction, originated in Borcherds' construction of the fake Monster Lie algebra. The idea is to add imaginary simple roots to extend the given Kac-Moody algebra so that the denominator of the extended algebra becomes an automorphic form. Their construction is given by a meromorphic automorphic form
$\Phi(z)$ on $\Omega(V^+(M))$ with respect to a subgroup $\Gamma \subset O^+_L$ of finite index. Following their definition, an automorphic form $\Phi(z)$ is called an {\em automorphic correction} of the Lorentzian Kac-Moody algebra $\mathfrak g$ if it has a Fourier expansion of the form
     \[ \Phi(z) = \sum_{w \in W} \det (w) \left ( e \left ( -  (w (\rho), z) \right )- \sum_{a \in M \cap
     \mathfrak D,\, a\ne 0} m(a) \, e(- (w(\rho+a), z)) \right ), \] where  $e(x)=e^{2 \pi i x}$ and $m(a) \in \mathbb Z$ for
     all $a \in M \cap \mathfrak D$.

 An automorphic correction $\Phi(z)$ defines a generalized Kac-Moody superalgebra $\mathcal G$ as in \cite{GN-02} so that the denominator of $\mathcal G$ is $\Phi(z)$.
In particular, the function $\Phi(z)$ determines the set of imaginary simple roots of  $\mathcal G$ in the following way:
First, assume that  $a \in M\cap \mathfrak D$ and $(a,a)<0$. If $m(a)>0$ then $a$ is an even imaginary simple root with multiplicity $m(a)$, and if $m(a)<0$ then $a$ is an odd imaginary simple root with multiplicity $-m(a)$. Next, assume that $a_0\in M\cap \mathfrak D$ is primitive and $(a_0,a_0)=0$. Then we define $\mu(na_0) \in \mathbb Z$, $n \in \mathbb N$ by 
\[  1 - \sum_{k=1}^\infty m(ka_0) t^k = \prod_{n=1}^\infty (1-t^n)^{\mu(na_0)} , \] where $t$ is a formal variable.
If $\mu(na_0) >0$ then $na_0$ is an even imaginary simple root with multiplicity $\mu(na_0)$; if $\mu(na_0)<0$ then $na_0$ is an odd imaginary simple root with multiplicity $-\mu(na_0)$. 
 
The generalized Kac-Moody superalgebra $\mathcal G$ will be also called an {\em automorphic correction} of $\mathfrak g$.
Using the Weyl-Kac-Borcherds denominator identity for $\mathcal G$, the automorphic form $\Phi(z)$ can be written as the infinite product
 \[ \Phi(z)= e(-(\rho, z)) \prod_{\alpha \in \Delta(\mathcal G)^+} (1 - e(-(\alpha, z)))^{\mathrm{mult}(\mathcal G, \alpha)} ,\] where $\Delta(\mathcal G)^+$ is the set of positive roots of $\mathcal G$ and $\mathrm{mult}(\mathcal G, \alpha)$ is the root multiplicity of 
$\alpha$ in $\mathcal G$.

Here are properties of $\mathcal G$:
\begin{enumerate}
  \item In general, the root multiplicities may be negative. So $\mathcal G$ is a superalgebra. If $\Phi(z)$ is holomorphic, then root multiplicities are positive, and hence $\mathcal G$ is a generalized Kac-Moody algebra.
  \item $\mathcal G$ and $\mathfrak g$ have the same root lattice $M$, the same Weyl vector $\rho$, and the same Weyl group $W$. 
  \item A positive root of $\mathfrak g$ is also a positive root of $\mathcal G$, and in such a case,
  $\mathrm{mult}(\frak g, \alpha)\leq \mathrm{mult}(\mathcal G, \alpha)$. However, not all roots of $\mathcal G$ are roots of $\frak g$.
\end{enumerate}  

Hence finding an automorphic correction of $\mathfrak g$ is, given a sum $\sum_{w \in W} \det (w) e \left ( -  (w (\rho), z) \right )$,
to find $m(a)$ for each $a\in M\cap \mathfrak D$, $a \neq 0$, so that the resulting sum 
$$\sum_{w \in W} \det (w) \left ( e \left ( -  (w (\rho), z) \right )- \sum_{a \in M \cap
     \mathfrak D,\, a\ne 0} m(a) \, e(- (w(\rho+a), z)) \right ),
$$
is an automorphic form on $O(n+1,2)$. It is highly non-trivial. We will demonstrate how non-trivial it is, using a Siegel cusp form and a Hilbert modular form. 

\subsection{Questions} Here are some questions about automorphic correction.
\begin{enumerate}
\item The first natural question is whether an automorphic correction is unique. If it is, then the automorphic form $\Phi(z)$ is determined by the Weyl vector $\rho$ (along with $M$ and $W$), and by the property that its Fourier coefficients are integers. 

\item Given a Lorentzian Kac-Moody algebra $\mathfrak g$, the necessary and sufficient condition for the existence of an automorphic correction is that we have an automorphic form $\Phi(z)$ which has a Fourier expansion of the form 
$\Phi(z)=\sum_{a\in M} m(a) e(-(\rho+a, z))$ with $m(a)\in\Bbb Z$ and $m(0)=1$, and satisfies $\Phi(wz)=\det(w)\Phi(z)$ for $w \in W$. (See the arguments in Section \ref{sec-E10}.) The Weyl vector $\rho$ is minimal in the sense that if the Fourier coefficient $m(a)$ of $e(-(\rho+a, z))$ is non-zero, then $a \in \overline{V^+(M)}$.
 
There are many examples of automorphic forms on $O(n+1,2)$ with integer Fourier coefficients. This gives a heuristic reason for the expectation that every Lorentzian Kac-Moody algebra would have an automorphic correction. 
Now the question is: How can we construct $\Phi(z)$ for a given $\mathfrak g$? We need to know how to determine the level, namely, determine $\Gamma\subset O_L^+$, and the weight, and multiplier system $\chi$ of $\Phi(z)$. Here one may have to consider half-integral weight forms, and the multiplier system can be quite complicated. Any such automorphic form will have infinite product expansion. It is a striking application of Kac-Moody algebras.
\item A related question is whether a given automorphic form with integer Fourier coefficients could be an automorphic correction for a certain Kac-Moody algebra. Bruinier \cite{Bru} proved that a large class of meromorphic forms for $n \ge 2$ can be written as infinite products, called {\em Borcherds products}. It will be interesting to study when these products become automorphic correction.

\item Another question is: How can we determine when $\mathcal G$ is a superalgebra from the original algebra $\frak g$?
Namely, when do we look for a meromorphic automorphic form as automorphic correction for $\frak g$?

\end{enumerate}

\subsection{Rank $3$ hyperbolic Kac-Moody Algebra $\mathcal F$} \label{rank3}

Let $\mathcal F =\mathfrak g(A)$ be the hyperbolic Kac-Moody algebra associated to the generalized Cartan matrix $A=(a_{ij})=\begin{pmatrix} 2 & -2 & 0 \\ -2 & 2 & -1 \\ 0 & -1 & 2 \end{pmatrix}$.
We denote by $S_2(\mathbb C)$ (resp. $S_2(\mathbb Z)$) the set of all symmetric $2 \times 2$ complex (resp. integer) matrices, and define a quadratic form on $S_2(\mathbb Z)$ by $(X,X)=-2\det X$ for $X\in S_2(\mathbb Z)$.
Let $\{ \alpha_1, \alpha_2, \alpha_3 \}$ be the set of simple roots, identified with elements in $S_2(\mathbb Z)$:
$$\alpha_1=\begin{pmatrix} 0&-1\\-1&0\end{pmatrix},\quad \alpha_2=\begin{pmatrix} 1&1\\1&0\end{pmatrix},\quad \alpha_3=\begin{pmatrix} -1&0\\0&1\end{pmatrix}.
$$
Then the imaginary positive roots are identified with positive semi-definite matrices in $S_2(\mathbb Z)$. If $N \in S_2(\mathbb Z)$ is positive semi-definite, we will write $N \ge 0$.  The set of positive real roots is given by \[ \Delta^+_{\mathrm{re}} = \left \{ \begin{pmatrix} n_1 & n_2 \\ n_2 & n_3 \end{pmatrix} \in S_2(\mathbb Z) \Big | n_1 n_3 - n_2^2 = -1, 
n_2 \le n_1 + n_3 , 0 \le n_1 + n_3 , 0 \le n_3
  \right \}  . \]

The Weyl group $W$ of $\mathcal F$ is isomorphic to $PGL_2(\mathbb Z)$ through the map given by
\[ \sigma_1 \mapsto \begin{pmatrix} 1 & 0 \\ 0 & -1 \end{pmatrix}, \quad \sigma_2 \mapsto \begin{pmatrix} -1 & 1 \\ 0 & 1 \end{pmatrix}, \quad \sigma_3 \mapsto \begin{pmatrix} 0 & 1 \\ 1 & 0 \end{pmatrix} ,\] where $\sigma_i$ ($i=1,2,3$) are the simple reflections corresponding to $\alpha_i$.
The Siegel upper half-plane $\mathbb H_2$ of genus $2$ is defined by \[ \mathbb H_2 = \left \{ Z=X+ iY \in S_2(\mathbb C) \ | \ Y\text{ is positive definite} \right \} .\] We will use the coordinates $z_1, z_2, z_3$ for $\mathbb H_2$ so that $Z= \begin{pmatrix} z_1 & z_2 \\ z_2 & z_3 \end{pmatrix} \in \mathbb H_2$, and define the pairing $(X,Z)$ for $X\in S_2(\mathbb Z)$ by $(X,Z)=-\mathrm{tr}(XZ)$.

The denominator identity for $\mathcal F$ is
\begin{eqnarray} & & e(\mathrm{tr}(PZ)) \prod_{0\leq N\in S_2(\mathbb Z)} (1-e( \mathrm{tr}(NZ)))^{\mathrm{mult}(N)} \prod_{N\in \Delta^+_{\mathrm{re}} } (1-e( \mathrm{tr}(NZ))) \nonumber \\ 
& =&  \sum_{g\in PGL_2(\mathbb Z)} \det(g) e( \mathrm{tr}(gPg^t Z)), \label{eqn-denom}
\end{eqnarray}
where $P=\begin{pmatrix} 3&\frac 12\\ \frac 12&2\end{pmatrix}$. The matrix $P$ corresponds to $\rho$, the Weyl vector.  

In \cite[Theorem 1.5]{GN-96},  Gritsenko and Nikulin proved that the Siegel modular form $\Delta_{35}(Z)$ of weight $35$ is an automorphic correction of $\mathcal F$. In order to define $\Delta_{35}(Z)$ as a product, we need to consider some Jacobi forms first.
For $k \ge 4$ even, we define the {\em Jacobi-Eisenstein series of weight $k$ and index $m$} by
\[ E_{k,m}(\tau ,z) = \tfrac 1 2  \sum_{c,d \in \mathbb Z \atop (c,d)=1} \sum_{\lambda \in \mathbb Z} (c \tau + d)^{-k} \ e \left ( m\lambda^2 \frac {a \tau +b}{c \tau +d} + 2m \lambda \frac z {c \tau +d} - \frac {cm z^2}{c \tau +d} \right ) ,\] where $a, b$ are chosen so that $\begin{pmatrix} a& b \\ c& d\end{pmatrix} \in SL_2(\mathbb Z)$. We also consider a Jacobi form of weight $12$ and index $1$:
\[ \phi_{12, 1}(\tau, z) = \tfrac 1 {144} \left ( E^2_4(\tau) E_{4,1}(\tau, z) -E_6(\tau) E_{6, 1}(\tau, z) \right ) , \]
where $E_k(\tau)$ are the usual Eisenstein series of weight $k$ defined by \[ E_{k}(\tau) = \tfrac 1 2  \sum_{c,d \in \mathbb Z \atop (c,d)=1} (c \tau + d)^{-k} .\]

Now we define a weak Jacobi form $\phi_{0,1}(\tau , z )$ of weight $0$ and index $1$ by
\begin{equation} \label{phi}
\phi_{0,1}(\tau , z ) = \frac { \phi_{12, 1}(\tau, z )}{\Delta_{12}(\tau)} =  \sum_{n=0}^\infty \sum_{r \in \mathbb Z} c(n,r) \ e( n \tau + r z ),
\end{equation}
where $\Delta_{12}(\tau) = e(\tau) \prod_{n \ge 1} (1 - e(n\tau))^{24}$ and $c(n,r)$ are the Fourier coefficients.
Since $c(n,r)$ depends only on $4n -r^2$, the following function is well-defined:
\begin{equation*}
c(N) = \begin{cases} c(n,r) & \text{ if } N= 4n -r^2, \\ 0 & \text{ otherwise.} \end{cases}
\end{equation*}
In particular, we have $c(0)=10, c(-1)=1$  and $c(n)=0$ for $n <-1$. We use the function $c(N)$ to define
\begin{equation} \label{eqn-f2} c_2(N) = 8c(4N) + 2 \left ( \left (\tfrac{-N}{2} \right ) -1 \right ) c(N) +c \left ( \tfrac N 4 \right ) ,\end{equation}
where we put
\[ \left ( \frac D 2 \right ) = \left \{ \begin{array}{rl} 1 & \text{for } D \equiv 1 \ (\text{mod }8), \\ - 1 & \text{for } D \equiv 5 \ (\text{mod }8), \\ 0 & \text{for } D \equiv 0 \ (\text{mod }2) . \end{array} \right . \]

For $Z = \begin{pmatrix} z_1 & z_2 \\ z_2 & z_3 \end{pmatrix} \in \mathbb H_2$, we set $q=e(z_1)$, $r=e(z_2)$ and $s=e(z_3)$. Let $S_2(\frac 1 2 \mathbb Z)$ be the set of symmetric half integral $2 \times 2$ matrices. Then we have 

\begin{equation} \label{eqn-Delta} \Delta_{35}(Z) = q^3 r s^2 \prod_{(n,l,m) \in \mathcal D} (1-q^nr^ls^m)^{c_2(4nm-l^2)}=\sum_{0<T\in S_2(\frac 1 2\mathbb Z)} A(T) e( \mathrm{tr}(TZ)),
\end{equation}
where $T>0$ means positive definite, and the integers $c_2(N)$ are defined in (\ref{eqn-f2}), and we denote by $\mathcal D$ the set of integer triples $(n,l,m) \in \mathbb Z^3$ such that (1) $(n,l,m) =(-1, 0, 1)$ or (2) $n \ge 0, m \ge 0$ and either $n+m>0$ and $l$ is arbitrary or $n=m=0$ and $l<0$. Since $\Delta_{35}(Z)$ is holomorphic, the root multiplicities $c_2(N)$ are positive, and hence the automorphic correction $\mathcal G$ is a generalized Kac-Moody algebra.

The group $PGL_2(\mathbb Z)$ acts on $S_2(\mathbb R)$ by $g(S)=gS g^t$. Then
a fundamental domain $\mathfrak D$ is given by the negative Weyl chamber of $\mathcal F$ through our identification of simple roots with matrices in $S_2(\mathbb R)$. Explicitly, we obtain
$$\mathfrak D=\left \{ \begin{pmatrix} y_1&y_2\\y_2&y_3\end{pmatrix}\in S_2(\mathbb R) \Big |\, \text{$0\leq 2y_2\leq y_3\leq y_1$; if $y_2=0$, then $0<y_3\leq y_1$} \right \}.
$$
Note that $\mathfrak D$ is the fundamental domain defined in Section \ref{automorphic}. Since $A(gTg^t)=\det(g) A(T)$ for $g\in PGL_2(\mathbb Z)$, we can write $\Delta_{35}(Z)$ as
$$\Delta_{35}(Z)=\sum_{T\in S_2(\frac 12\mathbb Z)\cap \mathfrak D} A(T) \sum_{g\in PGL_2(\mathbb Z)} 
\det(g) e(\mathrm{tr}(gTg^t Z)).
$$

Note that the inner sum for $T=P$ is the summation side of the  denominator identity \eqref{eqn-denom} of the hyperbolic Kac-Moody algebra $\mathcal F$.
Hence the automorphic correction is to find $A(T)$ for all $T\in S_2(\mathbb Z)\cap \mathcal D$ so that
the resulting sum
$$\sum_{T\in \mathfrak D} A(T) \sum_{g\in PGL_2(\mathbb Z)} \det(g) e(\mathrm{tr}(gTg^t Z))
$$
is modular. It is highly non-trivial from the point of view of automorphic forms.

\subsection{Rank 2 hyperbolic Kac-Moody algebras}

Let $A=\begin{pmatrix} 2 & -3 \\ -3 & 2 \end{pmatrix}$ be a generalized Cartan matrix, and  $\mathcal H(3)$ be the hyperbolic Kac-Moody algebra associated with the matrix $A$.
Let $F=\mathbb Q[\sqrt{5}]$ and $\mathcal O$ be the ring of integers of $F$. We choose a fundamental unit $\varepsilon_0=\frac {1+\sqrt{5}}2$ and set $\eta = \varepsilon_0^2 = \frac {3+\sqrt{5}}2$. 

The roots of $\mathcal H(3)$ can be identified with elements of the inverse different $\mathfrak{d}^{-1} = \frac 1 {\sqrt 5} \mathcal O$ as follows. The set of positive real roots is given by 
\[ \Delta^+_{\mathrm{re}} = \left \{ \frac 1 {\sqrt{5}}  \eta^{j} \ (j > 0),  \qquad
 -\frac 1 {\sqrt{5}}  \bar \eta^{j} \ (j \ge 0)  \right \} , \]  where $\bar x$ is the conjugate of $x$ in $F$. The set of positive imaginary roots is given by
\begin{equation} \label{imroot} \Delta^+_{\mathrm{im}} = \left \{  \frac 1 {\sqrt{p}} \, \eta^{j} (m \eta -n) ,  \
\frac 1 {\sqrt{5}}  \, \eta^{j}(n \eta -m) , \
\frac 1 {\sqrt{5}} \, \bar \eta^{j}(n - m \bar \eta ) , \
\frac 1 {\sqrt{5}} \,  \bar \eta^{j}(m  - n \bar \eta) \right \} , \end{equation}
where $j \ge 0$ and $(m,n) \in \Omega_k$ for $k \ge 1$ and the set $\Omega_k$ is defined to be 
\[  \left \{ (m,n) \in \mathbb Z_{\ge 0} \times \mathbb Z_{\ge 0} : \sqrt{\frac {4 k}{5} } \le m \le \sqrt { k },\ n = \frac { 3m-\sqrt{5m^2 -4k}} 2 \right \}. \] 
See  \cite{LM, Fein, KaMe, KimLee-1} for more details. The Weyl group $W$ also acts on $F$; in particular, the simple reflections $r_1$ and $r_2$ are given by
\[ r_1 x = \eta^2 \bar x \quad \text{ and } \quad r_2x= \bar x \qquad \text{for } x \in F .\] Then the Weyl group is identified with the semidirect product of multiplication by $\eta^{2n}$, $n \in \mathbb Z$ and conjugation, i.e. $W \cong \{ \eta^{2n} : n \in \mathbb Z \} \rtimes \{ \bar \cdot \}$. 

Let $\mathbb H$ be the upper half plane. We define a paring on $F \times \mathbb H^2$ by
\[ (\nu, z)= -5\,(\nu z_2+ \bar \nu z_1) \qquad \text{for } \nu \in F \text{ and } z=(z_1, z_2) \in \mathbb H^2, \] and consider the denominator function of $\mathcal H(3)$ as a function on $\mathbb H^2$. Then  the denominator identity is, for $z \in \mathbb H^2$,
$$
e(-(\rho, z)) \prod_{\alpha\in \Delta_+} (1-e(-(\alpha,z)))^{\mathrm{mult}(\alpha)}
=\sum_{w\in W} \det(w) e(-(w\rho, z)) ,
$$
where we have
\[  \rho = \frac 1 5 (1+\eta)= \frac {\varepsilon_0}{\mathrm{tr}(\varepsilon_0)\sqrt 5} = \frac {1+\sqrt 5}{2 \sqrt 5} .\]

Consider the Hilbert modular form $\Psi$ of weight $5$ defined by
\[\Psi(z)= e\left( \frac {\varepsilon_0 z_1}{\sqrt{5}}-\frac {\bar \varepsilon_0 z_2}{\sqrt{5}}\right)\prod_{\substack{\nu\in\frak{d}^{-1} \\ \nu \gg 0} }
(1-e(\nu z_1+\bar \nu z_2))^{s(p\nu\bar \nu)a(p\nu\bar \nu)} \prod_{\substack{\nu\in\frak{d}^{-1} ,\ \nu+ 2\bar\nu>0 \\ N(\nu)=-m^2/5} }
(1-e(\nu z_1+\bar \nu z_2)) \]
for $z=(z_1, z_2 ) \in \mathbb H^2$, where $s(n)=1$ if $5\nmid n$ or $s(n)=2$ otherwise, and $a(n)$ is the Fourier coefficient of the weakly holomorphic modular form $f$ of weight $0$ for the group $\Gamma_0(5)$ with principal part $q^{-1}$:
$$f(\tau)=q^{-1}+\sum_{n=0}^\infty a(n)q^n= q^{-1}+5+11q-54q^4+55q^5+44q^6-395q^9+340q^{10}+\cdots.
$$
The function $\Psi(z)$ has a Fourier expansion of the form
\[ \Psi(z) = \sum_{\nu\in\frak{d}^{-1} }  A(\nu) e(\nu z_1+\bar \nu z_2).\]

It is known (cf. \cite{BB}) that $\Psi(z)$ is a meromorphic cusp form and skew-symmetric, i.e., $\Psi(z_2,z_1)=-\Psi(z_1,z_2)$.
We set $\Phi(z)=\Psi(5z_2, 5z_1)=-\Psi(5z_1,5z_2)$. In \cite{KimLee-1}, it is shown that $\Phi(z)$ is an automorphic correction of 
$\mathcal H(3)$, and we can write
\[\Phi(z)= e(-(\rho, z))\prod_{\substack{\nu\in\frak{d}^{-1} \\ \nu \gg 0} }
(1-e(-(\nu,z)))^{s(p\nu\bar \nu)a(p\nu\bar \nu)} \prod_{\alpha\in \Delta^+_{\mathrm{re}}}
(1-e(-(\nu,z))) =  \sum_{\nu\in\frak{d}^{-1}}  A(\nu) e( -(\nu, z)) .\]
Since $\Phi(z)$ is meromorphic, the automorphic correction $\mathcal G$ is a superalgebra.

Let $\mathfrak D$ be a fundamental domain of $\mathfrak{d}^{-1}$ by the action of $W$. 
Since $\Phi(wz)=\det(w) \Phi (z)$ for $w \in W$, we have $A(w\nu)=\det(w)A(\nu)$, and we can write $\Phi(z)$ as
$$\sum_{\nu\in \mathfrak D} A(\nu) \sum_{w\in W} \det(w) e( -(w\nu, z)).
$$
Note that the inner sum when $\nu=\rho$ is the denominator of the hyperbolic Kac-Moody algebra $\mathcal H(3)$.
So finding an automorphic correction of $\mathcal H(3)$ is highly non-trivial.


\section{Kac-Moody Algebra $E_{10}$} \label{hyperbolic}

In this section we fix our notations for the hyperbolic Kac-Moody algebra $E_{10}$. We will follow the notational conventions in \cite{FKN} for the root system of $E_{10}$.

\medskip

Let $\mathbb O$ be the normed division algebra of octonians, consisting of the elements of the form
\[  x_0 + \sum_{i=1}^7 x_i e_i, \qquad  x_i \in \mathbb R,  \]
and equipped with the multiplication satisfying the relations
\[ e_i^2 = -1, \quad e_i e_j = - e_j e_i \ ( i\neq j), \quad e_i e_{i+1} e_{i+3} = -1, \] where the indices are to be taken modulo seven.
Let $H_2(\mathbb O)$ be the Jordan algebra of all Hermitian $2 \times 2$ matrices over $\mathbb O$, i.e.
\[ H_2(\mathbb O) = \left \{ X = \begin{pmatrix} x^+ & z \\ \bar z & x^- \end{pmatrix} : x^+, x^- \in \mathbb R, \ z \in \mathbb O \right \}. \]
We define a quadratic form on $H_2(\mathbb O)$ by
\[ ||X||^2= -2 \det (X) = -2( x^+x^- - z\bar z), \] and obtain the corresponding symmetric bilinear form $(X, Y) = \frac 1 2 ( || X+Y||^2 - ||X||^2 -||Y||^2)$, $X, Y \in H_2(\mathbb O)$. That is, if $X= \begin{pmatrix} x^+ & z \\ \bar z & x^- \end{pmatrix}$ and $Y=\begin{pmatrix} y^+ & w \\ \bar w & y^- \end{pmatrix}$,  then
\[ (X,Y)= -x^+y^- - y^+x^- + z \bar w + w\bar z.\]

We choose the following octonionic units to be the simple roots of the lattice $E_8$:
\[ \begin{array} {lll} a_1 = e_3, & \phantom{LLL}& a_2 = \frac 1 2 ( -e_1 -e_2 -e_3 + e_4) , \\ a_3 = e_1, & & a_4 = \frac 1 2 ( -1 -e_1 -e_4 + e_5) , \\a_5 = 1, & & a_6 = \frac 1 2 ( -1 -e_5 -e_6 - e_7) , \\a_7 = e_6, & & a_8 = \frac 1 2 ( -1 +e_2 +e_4 + e_7) . \end{array} \]
The root lattice spanned by these simple roots gives all octonionic integers called {\em octavians}, and we will denote the lattice by $\mathsf O$. Thus we make the identification $\mathsf O \cong E_8$. The highest root is given by
\[ \theta = 2 a_1 + 3 a_2 + 4 a_3 + 5a_4 +6 a_5 + 4 a_6 +2 a_7 + 3a_8 = \tfrac 1 2 (e_3 + e_4 + e_5 - e_7) .\] 

We consider a lattice $\Lambda $ in $H_2( \mathbb O)$  given by
\[ \Lambda = \left \{  X = \begin{pmatrix} x^+ & z \\ \bar z & x^- \end{pmatrix} : x^+, x^- \in \mathbb Z, \ z \in \mathsf O \right \},  \] and choose vectors
\[ \alpha_{-1} = \begin{pmatrix} 1 & 0 \\ 0 & -1 \end{pmatrix}, \quad \alpha_{0} = \begin{pmatrix} -1 & - \theta  \\ -\bar \theta  & 0 \end{pmatrix}, \quad \alpha_{i} = \begin{pmatrix} 0 & a_i \\ \bar a_i & 0 \end{pmatrix}, 1 \le i \le 8  . \] Then $\{\alpha_{-1}, \alpha_0, \alpha_1, \dots , \alpha_8 \}$ is a basis for $\Lambda$ and the matrix $\left ( \frac {2 (\alpha_i, \alpha_j)}{(\alpha_i, \alpha_i)} \right )$ is the generalized Cartan matrix of $E_{10}$. We will denote by $\mathfrak g$ the corresponding Kac-Moody algebra of $E_{10}$. We define $\Lambda^+$ to be the additive monoid generated by $\alpha_{-1}, \alpha_0, \alpha_1, \dots , \alpha_8$, i.e. \[\Lambda^+ := \mathbb Z_{\ge 0} \, \alpha_{-1} + \mathbb Z_{\ge 0} \, \alpha_0 + \mathbb Z_{\ge 0} \, \alpha_1 + \cdots + \mathbb Z_{\ge 0} \, \alpha_8 . \]
We write $z  \ge w$ for $z,w \in \Lambda $ if $z-w \in \Lambda^+$. 

The Weyl group $W$ of $\mathfrak g$ is generated by the simple reflections
\[  w_i (X) = X - \frac {2 (X, \alpha_i)}{(\alpha_i, \alpha_i)} \, \alpha_i, \qquad i = -1, 0, 1, \dots , 8 , \ X \in H_2(\mathbb O).\] 

Now we determine the set $\Delta^+_{\mathrm{re}}$ of positive real roots and the set $\Delta^+_{\mathrm{im}}$ of positive imaginary roots of $\mathfrak g$. We write $\Delta^+= \Delta^+_{\mathrm{re}} \cup \Delta^-_{\mathrm{im}}$. Consider $X= \begin{pmatrix} x^+ & z \\ \bar z & x^- \end{pmatrix} \in \Lambda$. From Proposition 5.10 in \cite{Kac}, we have that $X$ is a real root $\Leftrightarrow \det X = -1$ and that $X$ is an imaginary root $\Leftrightarrow \det X \ge 0$.  We have \[ X \in \Delta^+ \quad  \Longleftrightarrow \quad \det X \ge -1 \text{ and } X \in \Lambda^+,\] and it is easy to see that $X \in \Lambda^+$ if and only if $x^- \le0$, $x^+ +x^- \le 0$, and $z\ge (x^+ +x^-) \theta$.

The Weyl vector $\rho$ is given by
\[ \rho=30\alpha_{-1}+61\alpha_{0}+93\alpha_{1}+126\alpha_{2}+160\alpha_{3}
+195\alpha_{4}+231\alpha_{5}+153\alpha_{6}+76\alpha_{7}+115\alpha_{8}. \] 
Put in the matrix notation, it becomes
\begin{equation} \label{eqn-rho}
\rho = \begin{pmatrix} -31 & -\rho_K \\ -\overline{\rho_K} &  -30\end{pmatrix} \in \Lambda, 
\end{equation} where $\rho_K=\frac 1 2(1+e_1+11e_2+e_3+15e_4+19e_5+e_6-23e_7) \in \mathsf O$. Under the identification 
$\mathsf O \cong  E_8$,  $\rho_K$ corresponds to $\rho_{E_8}$.
We have $(\rho, \alpha_i)=-1$ for each $i=-1, 0, 1, \dots, 8$ and $(\rho, \rho)=-1240$, and $(\rho_K, \alpha_i)=1$ for each $i= 1, \dots, 8$. We obtain another criterion for positive roots:
\begin{equation} \label{eqn-pos} X \in \Delta^+ \quad  \Longleftrightarrow \quad \det X \ge -1 \text{ and } (\rho,X) <0.\end{equation}

It is proved in \cite{FKN} that the simple reflections $w_{-1}$, $w_0$,...,$w_i$, are given by
$$w_i(X)=M_i\bar X \bar M_i^t, \quad i=-1,0,...,8,
$$
where
$$M_{-1}=\begin{pmatrix} 0&1\\1&0\end{pmatrix},\quad M_0=\begin{pmatrix} -\theta&1\\0&\bar\theta\end{pmatrix},\quad M_i=\begin{pmatrix} a_i&0\\0& -\bar a_i\end{pmatrix}
$$ and  $a_i\in \mathsf O$, $i=1,...,8$, corresponds to the simple roots of $E_8$ as before.

Now $W=W^+\rtimes \{w_{-1}\}$, where $W^+$ is the even part, i.e. the subgroup of elements of even length. Here $W^+$ is generated by
$s_0=w_{-1}w_0$, $s_i=w_{-1}w_i$, $i=1,...,8$. Then
$$s_i(X)=S_i X \bar S_i^t,\quad S_0=\begin{pmatrix} 0&\theta\\-\bar\theta&1\end{pmatrix},\quad
S_i=\begin{pmatrix} 0&-a_i\\ \bar a_i&0\end{pmatrix}.
$$
For $s\in W^+$, write $s=s_{i_1}\cdots s_{i_k}$, and define
$$s(X)=s_{i_1}\cdots s_{i_k}(X).
$$
Then formally we can write $W^+=PSL(\mathsf O)$.

We defined the simple roots in such a way that a fundamental domain of $H_2(\mathbb O)$ under the Weyl group action is given by

\begin{equation} \label{eqn-lo} \mathfrak D = \{ X \in H_2(\mathbb O) : (X, \alpha_i) \le 0 \text{ for } i =-1,0, 1, \dots , 8 \}. \end{equation}
It is a negative Weyl chamber of $E_{10}$, namely,
$$\mathfrak D=\{ r_{-1}\lambda_{-1}+r_0\lambda_0+\sum_{i=1}^8 r_i\lambda_i :\, r_i\leq 0,\ i =-1,0, 1, \dots , 8 \},
$$
where $\lambda_i$'s are the fundamental weights, i.e., $(\lambda_i,\alpha_j)=\delta_{ij}$. Since $\mathsf O$ is self-dual,
we have
$$\lambda_{-1}=\begin{pmatrix} 1&0\\0&0\end{pmatrix},\quad \lambda_0=\begin{pmatrix} 1&0\\0&1\end{pmatrix},\quad
\lambda_i=\begin{pmatrix} 1& \frac 12 a_i\\ \frac 12 \bar a_i&1\end{pmatrix}.
$$
Hence we have
$$\mathfrak D=\left\{ \begin{pmatrix} x^+ & z \\ \bar z & x^- \end{pmatrix} :\, x^+\leq x^-\leq \sum_{i=1}^8 r_i\leq 0, \,  
2z=\sum_{i=1}^8 r_i a_i\right\}.
$$


\section{Automorphic correction of $E_{10}$} \label{sec-E10}

Let $M$ be the even unimodular Lorentzian lattice $II_{s+1,1}$ and we let $L=M\oplus II_{1,1}$. Borcherds proved the following theorem.

\begin{Thm} \cite[Theorem 10.1]{Bor-95} \label{thm-B}
Suppose that $f(\tau)=\sum_n c(n) q^n$ is a weakly holomorphic modular form of weight $-s/2$ for $SL_2(\mathbb Z)$ with integer coefficients, with $24|c(0)$ if $s=0$. Choose a negative norm vector $v_0 \in M\otimes \Bbb R$. Then there is a unique vector $\rho_{v_0} \in M$ such that
\begin{equation} \label{eqn-Bor}
 \Phi (z)= e(-(\rho_{v_0}, z)) \prod_{(r,v_0)>0} (1-e(-(r, z)))^{c(-(r,r)/2)}\end{equation}
is a meromorphic automorphic form of weight $c(0)/2$ for $O_L^+$. 
\end{Thm}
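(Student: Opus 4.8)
The plan is to realize $\Phi(z)$ as the Borcherds lift (singular theta lift) of the input form $f$, and to deduce the product formula, the weight, and the modularity from the general theory of such lifts. First I would set up the Weil representation of the metaplectic group $\mathrm{Mp}_2(\mathbb Z)$ on the group algebra $\mathbb C[L'/L]$ attached to the even lattice $L = M \oplus II_{1,1}$ of signature $(s+2,2)$. Since $M = II_{s+1,1}$ is even unimodular and $II_{1,1}$ is unimodular, $L$ is itself even unimodular, so $L'/L$ is trivial and the relevant vector-valued modular forms reduce to ordinary scalar modular forms for $SL_2(\mathbb Z)$; this is why the hypothesis only asks that $f$ have weight $-s/2 = -(s+2)/2 + 1 = 1 - (b^-/2)$ with $b^- = 2$, matching the weight required of the input to the singular theta lift in signature $(s+2,2)$. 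I would then form the regularized theta integral $\Phi_f(Z) = \int_{\mathcal F}^{\mathrm{reg}} \langle f(\tau), \Theta_L(\tau, Z)\rangle \, \frac{du\,dv}{v^2}$ over a fundamental domain $\mathcal F$ for $SL_2(\mathbb Z)$, where $\Theta_L$ is Siegel's theta function for $L$.

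Next I would analyze this regularized integral by the standard unfolding argument. One splits $f = \sum_n c(n) q^n$ into its singular (negative-$n$) part and its holomorphic part, unfolds the integral against the sum over lattice vectors, and evaluates the resulting expression on the tube domain $\mathcal K^+ \cong \Omega(V^+(M))$ using the identification $z \mapsto [\tfrac{(z,z)}{2m} e_1 + e_2 + z]$ supplied in Section~\ref{automorphic}. The outcome, after exponentiating the real-analytic function $\Phi_f$, is a meromorphic function whose logarithm produces precisely the product $\prod_{(r,v_0)>0}(1 - e(-(r,z)))^{c(-(r,r)/2)}$: each lattice vector $r$ of norm $(r,r) = -2n$ contributes the coefficient $c(n) = c(-(r,r)/2)$, and the choice of the negative-norm vector $v_0 \in M \otimes \mathbb R$ fixes a Weyl chamber and hence the half $(r,v_0)>0$ over which the product is taken. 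The leading exponential $e(-(\rho_{v_0},z))$ emerges from the contribution of the Weyl vector $\rho_{v_0}$, which is determined uniquely by $v_0$ as the Weyl vector attached to that chamber; uniqueness of $\rho_{v_0}$ follows because it is pinned down by the reflection relations $(\rho_{v_0}, r) = -(r,r)/2$ on the walls, exactly as in the Weyl vector defined in Section~\ref{automorphic}.

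The modularity under $O_L^+$ is the payoff of the theta-lift construction: since $\Theta_L(\tau, Z)$ is invariant under the action of $O_L^+$ in the $Z$-variable and modular of the correct weight in $\tau$, the regularized integral $\Phi_f$ is automatically $O_L^+$-invariant, and the exponentiated form $\Phi(z)$ is therefore a meromorphic modular form for $O_L^+$; possible poles occur along the rational quadratic divisors coming from the principal part of $f$, which is why $\Phi$ is only meromorphic in general. The weight is read off from the constant term: the $\tau$-integral of the constant-term contribution $c(0)$ against the Gaussian in $\Theta_L$ yields homogeneity of degree $-c(0)/2$ in $Z$, giving weight $c(0)/2$, and the divisibility condition $24 \mid c(0)$ when $s=0$ guarantees that the multiplier system is trivial in the delicate rank-one case.

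The main obstacle is the regularization and the unfolding of the divergent theta integral. Because $f$ is only \emph{weakly} holomorphic, with a pole at the cusp, the integrand grows and the naive theta integral diverges; controlling this requires Harvey--Moore/Borcherds regularization (analytic continuation in an auxiliary variable $\mathrm{Re}(\sigma)$ of $\int^{\mathrm{reg}} v^{-\sigma}$, then taking $\sigma \to 0$), and one must carefully track the singular terms that survive this limit and show they assemble into the stated infinite product rather than producing extra non-product factors. Establishing that the product converges in a neighborhood of the cusp corresponding to $v_0$, and that its meromorphic continuation matches the globally defined theta lift, is the technical heart of the argument; everything else (triviality of the discriminant form, identification of the tube domain, extraction of the Weyl vector) is formal once this analytic step is in place.
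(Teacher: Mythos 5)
The first thing to note is that the paper does not prove this statement at all: it is imported verbatim as Theorem 10.1 of \cite{Bor-95} and used as a black box, so there is no internal proof to compare yours against. Your sketch is nevertheless an outline of a valid proof --- but it is Borcherds' \emph{second} proof, the one in \cite{Bor-98} via the Harvey--Moore regularized theta lift, not the proof in the 1995 paper actually being cited. The original argument in \cite{Bor-95} does not use a regularized theta integral; it establishes automorphy of the product by far more explicit (and messier) means, manipulating Fourier expansions at cusps and verifying the transformation law under generators of $O_L^+$, and Borcherds introduced the theta-lift proof in \cite{Bor-98} precisely because it is cleaner and more general. So your route is the now-standard one: it buys conceptual clarity --- in particular, your observation that unimodularity of $L = II_{s+1,1} \oplus II_{1,1}$ makes $L'/L$ trivial and reduces the vector-valued input to a scalar form for $SL_2(\mathbb Z)$ is exactly the right explanation of why the theorem can be stated for scalar forms of weight $-s/2$ --- at the price of the analytic work of regularization and unfolding, which you correctly identify as the technical heart and, reasonably for a sketch of a deep known theorem, do not carry out.

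Two specific slips you should correct. First, the weight bookkeeping: for $L$ of signature $(s+2,2)$ the input weight is $1 - n/2$ with $n = s+2$ the dimension of the part complementary to the $2$; your formula ``$1 - b^-/2$ with $b^- = 2$'' evaluates to $0$, not $-s/2$, so your labels for the two parts of the signature are swapped (the arithmetic $-(s+2)/2 + 1 = -s/2$ that you wrote is fine). Second, your characterization of the Weyl vector is not correct in general: $\rho_{v_0}$ is \emph{not} pinned down by relations $(\rho_{v_0}, r) = -(r,r)/2$ on the walls of the chamber of $v_0$. That characterization belongs to the reflective, Kac--Moody situation (and is what makes the $E_{10}$ example of this paper work); for a general input $f$ the Weyl vector is produced by an explicit formula, Theorem 10.4 of \cite{Bor-95}, which is in fact what the present paper invokes in Section \ref{sec-E10} to compute $\rho_{v_0} = (\rho_{E_8}, 30, 31)$ from the constant terms of $E_4(\tau)f(\tau)E_2(\tau)/24$ and $E_4(\tau)f(\tau)$. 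Uniqueness of $\rho_{v_0}$ is the elementary part of the statement (two candidates would make the ratio $e((\rho-\rho', z))$ an $O_L^+$-invariant nowhere-vanishing function of weight zero, forcing $\rho = \rho'$); the substance lies in existence and in the explicit formula, which your reflection-relation argument does not supply.
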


We consider the case when $s=8$ and the weakly holomorphic modular form is defined by
\[f(\tau) = E_4(\tau)^2/\Delta_{12}(\tau) = q^{-1}+504+73764q+2695040q^2+\cdots .\] 
Borcherds mentions this modular form in Section 16 of \cite{Bor-95}.
We write $M=II_{9,1}=E_8\oplus II_{1,1}$.
 Then
we have an isomorphism $\nu: M \overset\sim\longrightarrow \Lambda$ defined by \[ \nu: (z, x^+, x^-) \mapsto \begin{pmatrix} -x^+ & -z \\ -\bar z & -x^- \end{pmatrix} .\] Furthermore, a set of simple roots of the lattice $M$ and its Dynkin diagram are given by those of the Kac-Moody algebra $E_{10}$ considered in Section \ref{hyperbolic} through the identification $\Lambda \cong M$. (See also \cite{Con}). Thus the Kac-Moody algebra $E_{10}$ is Lorentzian.

We take $v_0=(-\rho_{E_8}, -m, -d/24) \in M$ where  $m$ is the constant term of $E_4(\tau)f(\tau) E_2(\tau)/24$, and $d$ is the constant term of $E_4(\tau)f(\tau)$. One can check $m=30$, $d=744$ and \[ \rho_{E_8}= 29\alpha_1 +57 \alpha_2 +84 \alpha_3+110 \alpha_4+135 \alpha_5+91 \alpha_6+46 \alpha_7+68 \alpha_8 .\]  From Theorem 10.4 in \cite{Bor-95}, the vector $\rho_{v_0}$ in Theorem \ref{thm-B} is given by \[\rho_{v_0}=(\rho_K, m, d/24)=(\rho_{E_8}, 30, 31)=-v_0 .\]  
Therefore, the vector $\rho_{v_0}$ in Theorem \ref{thm-B} exactly corresponds to  the Weyl vector $\rho$ of $E_{10}$ in \eqref{eqn-rho}, i.e. we have
\begin{equation} \label{eqn-90}
 \nu(\rho_{v_0})= \rho = \begin{pmatrix}
-31 & - \rho_K \\ -\bar \rho_K & -30
\end{pmatrix} \in \Lambda \quad \text{and} \quad \nu(v_0)= -\rho .\end{equation}
Consequently, if $\nu(r)=\alpha$, we get
\begin{equation} \label{eqn-12}
 (r,v_0)>0 \ \Longleftrightarrow \ (\alpha , \rho)<0.
 \end{equation} 

We denote by $\Phi_f(z)$ the automorphic form given by the Borcherds product \eqref{eqn-Bor}. With all the preparations we made in the above, we can now prove the following theorem.

\begin{Thm}
The function $\Phi_f(z)$ is an automorphic correction for the Kac-Moody algebra $E_{10}$.
\end{Thm}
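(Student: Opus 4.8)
The plan is to verify the defining conditions for an automorphic correction recorded in Section \ref{correction}: that $\Phi_f(z)$ is an automorphic form, that it has a Fourier expansion $\sum_{a\in M} m(a)\,e(-(\rho+a,z))$ with $m(a)\in\mathbb Z$ and $m(0)=1$, and that $\Phi_f(wz)=\det(w)\Phi_f(z)$ for all $w\in W$. The first point is immediate from Theorem \ref{thm-B}: with $s=8$ the function $\Phi_f$ is a meromorphic automorphic form of weight $c(0)/2=252$ for $O_L^+$, so nothing remains to be proved there.

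For the Fourier expansion I would transport the Borcherds product \eqref{eqn-Bor} to $\Lambda$ through the isometry $\nu$. By \eqref{eqn-90} the leading exponential becomes $e(-(\rho,z))$, giving $m(0)=1$, and by \eqref{eqn-12} the product ranges over those $\alpha=\nu(r)$ with $(\alpha,\rho)<0$, each carrying exponent $c(-(r,r)/2)=c(\det\alpha)$, since $\nu$ is an isometry and $(\alpha,\alpha)=-2\det\alpha$. Because $c(n)=0$ for $n<-1$, the factors with $\det\alpha<-1$ are trivial, and by \eqref{eqn-pos} the surviving factors are indexed exactly by the positive roots, so
\[ \Phi_f(z)=e(-(\rho,z))\prod_{\alpha\in\Delta^+}\bigl(1-e(-(\alpha,z))\bigr)^{c(\det\alpha)}. \]
Expanding this product shows every Fourier coefficient lies in $\mathbb Z$ (as each $c(\det\alpha)\in\mathbb Z$) and that $m(a)\ne 0$ forces $a$ to be a non-negative integral combination of positive roots, placing $\rho+a$ in the cone that makes $\rho$ the minimal Weyl vector.

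The heart of the matter is the anti-invariance $\Phi_f(wz)=\det(w)\Phi_f(z)$, and it suffices to check it on a simple reflection $w_i$ ($i=-1,0,\dots,8$), since these generate $W$. I would argue directly from the product: $w_i$ preserves the multiplicities, because $c(\det\alpha)$ depends only on the $W$-invariant quantity $\det\alpha$, and it permutes $\Delta^+\setminus\{\alpha_i\}$ while sending $\alpha_i\mapsto-\alpha_i$, and moreover $w_i\rho=\rho+\alpha_i$ because $(\rho,\alpha_i)=-1$ and $(\alpha_i,\alpha_i)=2$. Reindexing the permuted factors and combining the single altered factor with the shifted leading term through $e(-(\alpha_i,z))\bigl(1-e((\alpha_i,z))\bigr)=-\bigl(1-e(-(\alpha_i,z))\bigr)$ yields exactly $-\Phi_f(z)$, and the identity extends to all of $\mathcal K^+$ by meromorphic continuation. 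I expect this to be the main obstacle: one must work in a region where the Borcherds product converges, verify that $w_i$ carries it into a comparable region (or, equivalently, rearrange at the level of the Fourier expansion), and invoke the standard fact that a simple reflection fixes the set of positive roots other than $\alpha_i$ — including all positive imaginary roots — so that the infinite rearrangement is legitimate.

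Finally, grouping the expansion $\sum_{a} m(a)\,e(-(\rho+a,z))$ into $W$-orbits and using the relation $m(w(\rho+a)-\rho)=\det(w)m(a)$, which follows from the anti-invariance, reproduces the defining expansion of an automorphic correction in Section \ref{automorphic}: the orbit of $\rho$ contributes $\sum_{w\in W}\det(w)e(-(w\rho,z))$, and each remaining orbit, represented by some $\rho+a$ with $a\in M\cap\mathfrak D$, contributes the corresponding $\det(w)$-twisted sum. Since $f$ has non-negative coefficients $c(n)$ for $n\ge-1$, all exponents $c(\det\alpha)$ are non-negative, $\Phi_f$ is holomorphic, and the resulting $\mathcal G$ is a genuine generalized Kac-Moody algebra, which completes the identification of $\Phi_f$ as an automorphic correction of $E_{10}$.
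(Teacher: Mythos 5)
Your proposal is correct and follows essentially the same route as the paper's proof: transporting the Borcherds product \eqref{eqn-Bor} through the isometry $\nu$ and using \eqref{eqn-pos} and \eqref{eqn-12} to rewrite it as a product over $\Delta^+$, then proving $\Phi_f(w_iz)=-\Phi_f(z)$ by the same simple-reflection factor computation, and finally regrouping the Fourier expansion into $\det(w)$-twisted $W$-orbits. The only difference is cosmetic: you sketch the orbit-regrouping step explicitly, whereas the paper delegates it to the argument of Theorem 5.16 in \cite{KimLee-1}.
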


\begin{proof}
We identify $M$ with $\Lambda$ via the isometry $\nu$ as before. Write $f(\tau)=\sum_n c(n) q^n$, and note that $-(r,r)/2= -(\nu(r), \nu(r))/2=\det \nu(r)$ for $r \in M$.  Since $c(n)=0$ for $n \le -2$, we obtain from \eqref{eqn-pos} and \eqref{eqn-12}
\begin{equation} \label{eqn-gh} (r, v_0)>0 \ \text{ and } \ c(-(r,r)/2) \neq 0 \quad \Longleftrightarrow \quad \nu(r) \in \Delta^+, \end{equation} where $\Delta^+$ is the set of positive roots of $E_{10}$.
Therefore, we obtain from \eqref{eqn-Bor} and \eqref{eqn-90}
\[
 \Phi_f (z)= e(-(\rho, z)) \prod_{\alpha \in \Delta^+} (1-e(-(\alpha, z)))^{c(-(\alpha,\alpha)/2)}.
\]
 We claim that $\Phi_f(w_i z) = - \Phi_f(z)$ for each $i=-1,0,1, \dots, 8$. Indeed, since $(\rho, \alpha_i)=-1$, we have
\begin{align*} \Phi_f(w_i z)&= e(-(\rho, w_iz)) \prod_{\alpha \in \Delta^+} (1-e(-(\alpha, w_iz)))^{c(-(\alpha , \alpha)/2)}\\ &= e(-(w_i\rho, z)) \prod_{\alpha \in \Delta^+} (1-e(-(w_i \alpha, z)))^{c(-(\alpha , \alpha)/2)}\\ &= e(-(\rho +\alpha_i, z))\frac {1-e((\alpha_i,z))}{1-e(-(\alpha_i,z))} \prod_{\alpha \in \Delta^+} (1-e(-(\alpha, z)))^{c(-(\alpha,\alpha)/2)}\\
&= - \Phi_f(z).
\end{align*}
Then we have $\Phi_f(wz)=\det(w) \Phi_f(z)$ for $w \in W$.

Now we use the same argument as in the proof of Theorem 5.16 in \cite{KimLee-1} and obtain that
\[ \Phi_f(z) = \sum_{w \in W} \det (w) \left ( e \left ( -  (w (\rho), z) \right )- \sum_{a \in \Lambda \cap
     \mathfrak D,\, a\ne 0} m(a) \, e(- (w(\rho+a), z)) \right ), \]
where $\mathfrak D$ is defined in \eqref{eqn-lo} and $m(a) \in \mathbb Z$ for all $a \in \Lambda \cap \mathfrak D$. 
It completes the proof.

\end{proof}

\subsection{Some remarks}

Note that $f(\tau)=\sum_n c(n)q^n$ has positive Fourier coefficients. Hence the automorphic correction $\mathcal G$ of $E_{10}$ is a generalized Kac-Moody algebra with $\Phi_f$ as the denominator.
Moreover, from \eqref{eqn-gh}, we see that
the set of positive roots of $E_{10}$ coincides with the set of positive roots of $\mathcal G$. 
Let $\delta$ be the minimal null root, i.e., $(\delta,\delta)=0$. Then $\text{mult}(\delta)=8$ in $E_{10}$.
However, it follows from $c(0)=504$ that $\text{mult}(\delta)=504$ in $\mathcal G$. The extra $496$ dimensions come from imaginary simple roots.
Since $\Phi_f(wz)=\det(w) \Phi_f(z)$ for $w \in W$,  we can write, using the fundamental domain $\mathfrak D$, the function $\Phi_f(z)$ as
$$\Phi_f(z)=\sum_{r\in \mathfrak D} A(r) \sum_{w\in W} \det(w)e(-(w(r),z)).
$$
The inner sum when $r=\rho$ is the denominator of $E_{10}$.

\medskip

We obtain asymptotics of Fourier coefficients of the modular form $f(z)$, using the method of Hardy-Ramanujan-Rademacher.
Recall that
\[f(\tau) = E_4(\tau)^2/\Delta(\tau) =q^{-1}+\sum_{n=0}^\infty c(n)q^n=q^{-1}+504+73764q+2695040q^2+\cdots .\] 
The function  $f$ is a modular form of weight $-4$ for the full modular group $SL_2(\mathbb Z)$.
Therefore as in \cite{KimLee}, we can show
\[
c(n) = 2\pi  n^{-\frac 5 2}I_5(4\pi\sqrt{n})+O\left(n^{-\frac 9 4}\log(4\pi \sqrt n)I_5\left(2\pi\sqrt n\right)\right),
\]
where $I_5(x)$ is the Bessel $I$-function. It has an asymptotic expansion $I_5(x)=\frac {e^x}{\sqrt{2\pi x}}(1+O(\frac 1x)).$

\medskip

\subsection{Other generalized Kac-Moody algebras containing $E_{10}$}

We close this paper with mentioning two other generalized Kac-Moody algebras which contain $E_{10}$. We can compare these algebras with the automorphic correction $\mathcal G$ of $E_{10}$. These generalized Kac-Moody algebras are not automorphic correction, but they provide some upper bounds for root multiplicities of $E_{10}$. 

\subsubsection*{{\rm (1)} The algebra $\mathfrak g_{II_{9,1}}$ of physical states}

Let $M$ be a nonsingular even lattice, and $\mathfrak V(M)$ be the vertex algebra associated to $M$. Then we have the Virasoro operators $L_i$ on $\mathfrak V(M)$ for each $i \in \mathbb Z$. We define the {\em physical space} $P^n$ for each $n \in \mathbb Z$ to be the space of vectors $v \in \mathfrak V(M)$ such that
\[  L_0(v) = nv \qquad \text{ and } \qquad L_i(v)=0 \ \text{ for } i >0 .\] 
Then the space $\mathfrak g_M:= P^1/L_{-1} P^0$ is a Lie algebra
 and satisfies the following properties \cite{Bor-86}:
\begin{enumerate}
\item[(i)] Let $\mathfrak g$ be a Kac-Moody Lie algebra with a generalized Cartan matrix that is indecomposable, simply laced and non-affine. If the lattice $M$ contains the root lattice of $\mathfrak g$ then $\mathfrak g$ can be mapped into $\mathfrak g_M$ so that the kernel is in the center of $\mathfrak g$.

\item[(ii)] Let $d$ be the dimension of $M$, and $\alpha \in M$ be a root such that $(\alpha , \alpha) \le 0$. Then the root multiplicities of $\alpha$ in $\mathfrak g_M$ is equal to \begin{equation} \label{eqn-bound} p^{(d-1)}(1-(\alpha|\alpha)/2) - p^{(d-1)}(-(\alpha|\alpha)/2),\end{equation}
where $p^{(\ell)}(n)$ is the number of multi-partitions of $n$ into parts of $\ell$ colors. 
\end{enumerate}

Therefore, when we have a hyperbolic Kac-Moody algebra $\mathfrak g$ with root lattice $M$, the Lie algebra $\mathfrak g_M$ contains $\mathfrak g$ and provides an upper bound \eqref{eqn-bound} for root multiplicities of $\mathfrak g$. Moreover, it is well known that the Lie algebra $\mathfrak g_M$ is a generalized Kac-Moody algebra.

In particular, we can apply the above construction to the lattice $M=II_{9,1}$. Since $M$ is the root lattice of $E_{10}$ as we have seen in the previous sections, the algebra $\mathfrak g_{II_{9,1}}$ contains $E_{10}$ and the root multiplicity is given by $p^{(9)}(n)-p^{(9)}(n-1)$.
H. Nicolai and his coauthors studied the algebra $\mathfrak g_{II_{9,1}}$ extensively. For example, see \cite{BGGN, BGN}.

\subsubsection*{{\rm (2)} Niemann's algebra $\mathcal G_2$}

In his Ph.D. thesis \cite{Nie}, P. Niemann constructed a generalized Kac-Moody algebra $\mathcal G_{2}$ which contains $E_{10}$.

Let $\Lambda$ be the Leech lattice, i.e. the $24$-dimensional positive definite even unimodular lattice with no roots, and let $\hat \Lambda$ be the central extension of $\Lambda$ by a group of order $2$. Assume that $\sigma \in \mathrm{Aut}(\hat \Lambda)$ is of order $2$ and cycle shape $1^82^8$. Then we define the lattice $M=\Lambda^\sigma \oplus II_{1,1}$, where $\Lambda^\sigma$ is the fixed point lattice. The Weyl vector $\rho$ is given by $\rho=(0,0,1) \in M$ and we denote by $W^\sigma$ the full reflection group of the lattice $M$. We define $p_\sigma(n)$ by
$q \eta(z)^{-8} \eta(2z)^{-8} = \sum_{n=0}^\infty p_\sigma(n) q^n$, $q=e^{2 \pi i z}$, where $\eta(z)$ is Dedekind's eta-function.

With these data, Niemann \cite{Nie} constructed a generalized Kac-Moody algebra $\mathcal G_2$ and
proved the following twisted denominator identity of $\mathcal G_2$:
\begin{align} & e^\rho \prod_{r \in M^+} (1-e^r)^{p_\sigma(1-(r,r)/2)} \prod_{(2{M^*})^+} (1-e^r)^{p_\sigma(1-(r,r)/4)} \label{eqn-pq} \\ &= \sum_{w \in W^\sigma} \det(w) w \left ( e^\rho \prod_{i=1}^\infty (1-e^{i\rho})^8(1-e^{2i\rho})^8 \right ), \nonumber
\end{align}
where $M^*$ is the dual of $M$. Moreover he showed that $\mathcal G_2$ contains $E_{10}$. Consequently, it follows from \eqref{eqn-pq} that 
\[ \mathrm{mult}(E_{10}, \alpha) \le \begin{cases} p_\sigma ( 1 - \tfrac 1 2 (\alpha , \alpha) ) & \text{ if } \alpha \in M, \alpha \notin 2 M^* , \\ p_\sigma ( 1 - \tfrac 1 2 (\alpha , \alpha) ) +  p_\sigma ( 1 - \tfrac 1 {4} (\alpha , \alpha) ) & \text{ if } \alpha \in 2 M^* .
\end{cases}
\]


\end{document}